\newcommand{\Extra}[1]{}
\newtheorem{lemma}{Lemma}
\newtheorem*{corollary}{Corollary}
\newtheorem{theorem}{Theorem}
\theoremstyle{definition}
\DeclareMathOperator{\DBern}{D^{\mathrm{Bernoul}}}            
\DeclareMathOperator{\Dbin}{D^{\mathrm{binom}}_{\mathit{n}}}  
\DeclareMathOperator{\Dexch}{D^{\mathrm{exch}}}               
\newcommand*{\bin}{\mathrm{bin}} 
\newcommand*{\FFF}{\mathcal{F}}  
\newcommand*{\KP}{\mathit{KP}}   
\title{On the concept of Bernoulliness}
\author{Vladimir Vovk}
\begin{document}
\maketitle

\begin{abstract}
  The first part of this paper is another English translation of \cite{Vovk:1986}.
  It gives a natural definition of a finite Bernoulli sequence
  (i.e., a typical realization of a finite sequence of binary IID trials)
  and compares it with the Kolmogorov--Martin-L\"of definition,
  which is interpreted as defining exchangeable sequences.
  The appendix gives the historical background and proofs.

  The version of this paper at \url{http://alrw.net} (Working Paper 15)
  is updated more often than the arXiv report.
\end{abstract}

This note gives a definition of a ``Bernoulli sequence'',
i.e., a finite sequence of 0s and 1s that is random with respect to the class of Bernoulli measures.
Our definition is different from A.~N.~Kolmogorov's \cite{Kolmogorov:1968}
and more similar to the definition in \cite{Levin:1973};
for terminological convenience, sequences random in the sense of \cite{Kolmogorov:1968}
will be called collectives.

\paragraph{1.}
Denote by $X$ the union of the following sets:
$2^*$, the set of all finite sequences of 0s and 1s,
$N^*$, the set of all finite sequences of natural numbers,
$N$, the set of all natural numbers,
and $\FFF(N)$, the set of all finite subsets of the set $N$
(a finite subset of $N$ is identified with the list of its elements in increasing order).
The set $X$ defined in this way contains all objects that we will need.

A \emph{description method} is a partial function $B:2^*\times X\to X$ that has an algorithm computing its values
(the reader not familiar with the theory of algorithms
can safely rely on his intuitive idea of computability).
The length of a shortest $p$ such that $B(p,y)=x$ will be denoted $K_B(x\mid y)$
and called the \emph{complexity of $x$ given $y$ under the description method $B$}.

\begin{lemma}\label{lem:1}
  There exists a description method $A(p,y)$ such that, for any description method $B(p,y)$,
  \[
    K_A(x\mid y) \le K_B(x\mid y) + C,
  \]
  where $C$ is a constant that does not depend on $x$ and $y$.
\end{lemma}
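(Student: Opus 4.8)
The plan is to build $A$ as a \emph{universal} description method that simulates every other description method after first reading off a self-delimiting index. The starting point is the enumeration theorem for algorithms: since each description method is by definition a partial function equipped with an algorithm computing it, there is an effective enumeration $B_1,B_2,B_3,\ldots$ of description methods for which the ternary map $(n,p,y)\mapsto B_n(p,y)$ is itself computable (as a partial function). Every description method $B$ coincides with some $B_n$ on this list, and that index $n$ is the only thing that will enter the constant $C$.

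Next I would fix a computable prefix-free encoding $n\mapsto\bar n\in 2^*$ of the natural numbers, for concreteness writing $n$ in binary, doubling each bit, and appending the delimiter $01$; then $\bar n$ can be recovered from any string beginning with it by scanning left to right, and its length depends only on $n$. With this in hand I would define $A(q,y)$ by scanning $q$ for the prefix-free code, splitting it (when this is possible) as $q=\bar n p$, and then returning $B_n(p,y)$. This $A$ is a partial function $2^*\times X\to X$, and it is computable: the splitting is computable and the enumeration is effective, so simulating $B_n(p,y)$ is an algorithmic task. Hence $A$ is a legitimate description method.

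The inequality is then immediate. Fix any description method $B=B_n$ and any $x,y$, and let $p$ be a shortest program with $B_n(p,y)=x$, so that $|p|=K_B(x\mid y)$. By construction $A(\bar n p,y)=B_n(p,y)=x$, and therefore $K_A(x\mid y)\le|\bar n p|=|\bar n|+K_B(x\mid y)$. Taking $C=|\bar n|$ proves the claim, since $|\bar n|$ depends only on the index $n$ of $B$ in the enumeration, hence only on $B$, and not on $x$ or $y$.

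The main obstacle is conceptual rather than computational: it lies in making the effective enumeration $B_1,B_2,\ldots$ precise and in checking that the $A$ defined from it is genuinely an algorithmically computable partial function, not merely a description of one. This is exactly where the enumeration (universality) theorem for the underlying model of computation does the real work; once it is granted, the prefix-free code handles the parsing with no ambiguity and the length bookkeeping producing $C$ is trivial.
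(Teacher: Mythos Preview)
Your argument is correct and is exactly the classical invariance-theorem construction: enumerate the partial computable functions $B_1,B_2,\ldots$, prepend a self-delimiting index $\bar n$, and set $A(\bar n p,y)=B_n(p,y)$, yielding $K_A(x\mid y)\le K_{B_n}(x\mid y)+|\bar n|$. The paper does not give its own proof of Lemma~\ref{lem:1} but simply refers the reader to \cite{Vyugin:1994}, where the same standard argument appears; so there is nothing to compare against beyond noting that your proof is the expected one.
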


A description method is \emph{prefix} if, for any $p\in2^*$ and $p'\in2^*$ such that $p$ is a prefix of $p'$,
$B(p,y)=B(p',y)$ for all $y\in X$.
Lemma~\ref{lem:1} will remain true if ``description method'' is replaced by ``prefix description method''.

Let us fix a description method $A$ satisfying Lemma~\ref{lem:1};
$K_A(x\mid y)$ will be denoted $K(x\mid y)$ and called the \emph{complexity of $x$ given $y$}.
The \emph{prefix complexity $\KP(x\mid Y)$ of $x$ given $y$} is defined similarly.
Proofs of the assertions made above can be found in \cite{Vyugin:1994}.

\paragraph{2.}
Denote by $2^{(n)}$ the set of all sequences in $2^*$ of length $n$.
Let $p\in[0,1]$ and $n>0$ be an integer.
On the set $2^{(n)}$ define the Bernoulli measure with parameters $(n,p)$ as follows:
for any $x\in2^{(n)}$ set
$P\{x\}=p^k(1-p)^{n-k}$,
where $k$ is the number of 1s in $x$.
On the set $\{0,1,\ldots,n\}$ define the binomial measure with parameters $(n,p)$
by the equality
$P\{k\}=\binom{n}{k}p^k(1-p)^{n-k}$
for all $k=0,1,\ldots,n$.

With each Bernoulli (binomial) measure $P$ with parameters $(n,p)$
associate an integer-valued function $T(x\mid P)$ of the variable $x\in2^{(n)}$
(in the case of a binomial measure, $x\in\{0,1,\ldots,n\}$) so that:
\begin{itemize}
\item[(1)]
  $E2^{T(x\mid P)}\le1$, where $E$ stands for the mean under the measure $P$.
\item[(2)]
  As function of $x$ and $P$,
  the function $T$ is lower semicomputable.
  This means that there exists an algorithm $A$ such that:
  given $n$, $x$, and an ``oracle'' that for each $i\in N$ outputs a rational number $a_i$ satisfying
  $\left|a_i-p\right|\le2^{-i}$
  (of course, the sequence $a_i$ does not have to be computable),
  $A$ enumerates a nondecreasing sequence of integer numbers $m_j$
  such that $\sup_j m_j = T(x\mid P)$;
  it is required that $A$ should work correctly for an arbitrary ``oracle''
  (for oracular computability, see \cite{Rogers:1967}).
\end{itemize}
Such functions $T$ will be called \emph{tests} (for randomness).
A condition similar to (1) first appeared in \cite{Levin:1976}.

\begin{lemma}\label{lem:2}
  There exists a test $D(x\mid P)$ such that, for any test $T(x\mid P)$,
  \[
    D(x\mid P) \ge T(x\mid P) - C,
  \]
  where $C$ does not depend on $x$ and $P$.
\end{lemma}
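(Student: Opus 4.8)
The plan is to follow the ``enumerate, correct, and combine'' strategy behind the optimal description method of Lemma~\ref{lem:1}, adapted to the fact that the normalization~(1) must hold for \emph{every} parameter $p$ at once. Fix $n$, so that $x$ ranges over the finite set $2^{(n)}$ and a Bernoulli measure is pinned down by a single $p\in[0,1]$, with $P\{x\}=p^{k}(1-p)^{n-k}$ a polynomial in $p$ (the binomial case is the same). First I would record the usual reformulation of oracle lower semicomputability: the algorithm $A_e$ of~(2) can be unwound into a computable enumeration of triples $(x,I,m)$ with $I\subseteq[0,1]$ a rational interval and $m\in\mathbb{Z}$, to be read as ``$T(x\mid P)\ge m$ for all $p\in I$''; indeed any integer $A_e$ outputs is produced after finitely many oracle answers, and these confine $p$ to such an interval. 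The intended value is $T_e(x\mid P)=\sup\{m:(x,I,m)\text{ enumerated},\ p\in I\}$, with $T_e\equiv-\infty$ before anything is output.

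The heart of the argument is the correction turning a candidate $T_e$ into a bona fide test $\tilde T_e$. Starting from $\tilde T_e\equiv-\infty$ (so $2^{\tilde T_e}\equiv0$), I would process the triples of $A_e$ one by one, keeping a finite description of the current $\tilde T_e$: only finitely many triples have been accepted, so their interval endpoints cut $[0,1]$ into finitely many cells on each of which every $\tilde T_e(x\mid\cdot)$ is constant. When $(x_0,I_0,m_0)$ arrives I tentatively raise the affected values and check, on each cell $[a,b]$ meeting $I_0$, whether
\[
  \sum_{x\in 2^{(n)}} p^{k(x)}(1-p)^{n-k(x)}\,2^{\tilde T_e(x\mid P)}\le 1
  \qquad\text{for all } p\in[a,b].
\]
On such a cell the left-hand side is a polynomial in $p$ with rational coefficients, so this is a decidable question about nonnegativity of a rational polynomial on a rational interval. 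I accept the increase precisely when the check succeeds on every affected cell. By construction $\tilde T_e$ is lower semicomputable and obeys~(1) for all $p$; and since each accepted value is a true lower bound for $T_e$, we always have $\tilde T_e\le T_e$, so if $T_e$ is already a test every increase keeps the sum below $1$ and is accepted, giving $\tilde T_e=T_e$. Thus $\tilde T_1,\tilde T_2,\dots$ is an effective list of tests containing, up to equality, every test.

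It then remains to combine the list. The extended real $S(x\mid P)=\sum_e 2^{-e}\,2^{\tilde T_e(x\mid P)}$ is nonnegative and lower semicomputable (it is finite wherever $P\{x\}>0$, since there $2^{\tilde T_e(x\mid P)}\le 1/P\{x\}$ for every $e$, and may be $+\infty$ only at $p\in\{0,1\}$ for impossible $x$, which is harmless). Computing rationals $q_1\le q_2\le\cdots$ with $\sup_j q_j=S$, I set $D(x\mid P)=\sup_j\lfloor\log_2 q_j\rfloor$, an integer-valued lower semicomputable function. From $D\le\log_2 S$ we get $2^{D}\le S$, hence $E\,2^{D}\le\sum_e 2^{-e}\,E\,2^{\tilde T_e}\le 1$ for every $p$, which is~(1). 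From $q_j\uparrow S$ one checks $D\ge\log_2 S-1$; so if $T=\tilde T_{e_0}$ is any test, then $S\ge 2^{-e_0}2^{T}$ gives $D\ge T-(e_0+1)$, i.e.\ $D\ge T-C$ with $C=e_0+1$ independent of $x$ and $P$.

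The one delicate step is the correction. Because $p$ sweeps a continuum, one cannot simply ``raise $T$ at a point''; the increments must be organized by rational intervals and the constraint verified uniformly over each cell, which is exactly what makes the reduction to rational-polynomial nonnegativity both available and necessary. I expect the real work to lie in checking that the interval bookkeeping stays finite at every stage, that the reformulation of~(2) as an enumeration of triples is faithful for arbitrary oracles, and that the same scheme goes through verbatim for the binomial measures; the combination step is then routine and parallels Lemma~\ref{lem:1}.
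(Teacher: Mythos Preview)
The paper does not give a proof of Lemma~\ref{lem:2}; it is stated in Section~2 without argument, and the appendix supplies proofs only for Theorems~\ref{thm:1} and~\ref{thm:2}. The existence of a universal test is treated as a known fact in the spirit of Levin~\cite{Levin:1976}, so there is no ``paper's own proof'' to compare your attempt against.

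Judged on its own merits, your proposal is the standard enumerate--correct--combine construction and is correct. The only point that is genuinely specific to the present setting (a one-parameter family of measures rather than a single computable measure) is the correction step, and you have identified the right mechanism: since $P\{x\}$ is a polynomial in $p$ with integer coefficients and $2^{m}$ is dyadic rational for integer $m$, the constraint $E_P\,2^{\tilde T_e}\le 1$ on each cell reduces to the nonnegativity of a polynomial with rational coefficients on a rational interval, which is decidable. Your verification that every triple is accepted when $T_e$ is already a test (so $\tilde T_e=T_e$) is exactly what guarantees the domination inequality, and the combination step with $D=\sup_j\lfloor\log_2 q_j\rfloor$ and $C=e_0+1$ is routine. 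The caveats you flag at the end (faithfulness of the triple reformulation of condition~(2) for arbitrary oracles, uniformity over all $n$, and the parallel argument for the binomial measures) are indeed the places where care is needed, but none of them requires an idea beyond what you have already sketched.
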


The test $D(x\mid P)$ in Lemma~\ref{lem:2} will be called the \emph{randomness deficiency of $x$ with respect to the measure $P$}.

\paragraph{3.}
The \emph{Bernoulliness deficiency} $\DBern(x)$ of a sequence $x\in2^{(n)}$ is defined to be $\inf D(x\mid P)$,
where the $\inf$ is over the Bernoulli measures with parameters $(n,p)$ for all $p\in[0,1]$.
Similarly,
for a number $k$ in $\{0,1,\ldots,n\}$ define the \emph{binomiality deficiency} $\Dbin(k)$
as $\inf D(k\mid P)$;
here the $\inf$ is over the binomial measures.
A sequence in $2^*$ is called \emph{Bernoulli} if its Bernoulliness deficiency is small.
We speak of a binomial number in a similar sense.

\begin{theorem}\label{thm:1}
  If $x\in2^{(n)}$ contains $k$ 1s, then
  \begin{equation}\label{eq:thm1}
    \DBern(x)
    -
    \left[
      \log_2\binom{n}{k} - \KP(x\mid n,k,\Dbin(k))
    \right]
    =
    \Dbin(k) + O(1).
  \end{equation}
\end{theorem}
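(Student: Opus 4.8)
The plan is to build everything on the fact that the number $k$ of $1$s is a \emph{sufficient statistic} for the Bernoulli family. Concretely, for every $p$ the Bernoulli measure factorizes through its binomial marginal $Q=P^{\mathrm{bin}}_{(n,p)}$ as
\[
  P^{\mathrm{Bern}}_{(n,p)}(x)=\frac{Q(k)}{\binom{n}{k}},
\]
where $k$ is the number of $1$s in $x$; equivalently, conditionally on the level set of strings with $k$ ones every Bernoulli measure is the uniform measure there and carries no information about $p$. Writing $L=\log_2\binom{n}{k}$, this gives $-\log_2 P^{\mathrm{Bern}}_{(n,p)}(x)=-\log_2 Q(k)+L$, which already displays the split of the deficiency into a ``binomial part'' (is $k$ typical?) and a ``within--level--set part'' (is the arrangement of the $1$s typical?). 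I would prove \eqref{eq:thm1} by making this split exact up to $O(1)$.

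First I would record the easy half. Summing the universal test $D(\cdot\mid P)$ of Lemma~\ref{lem:2} over a level set produces a binomial test: setting $2^{S(k'\mid Q)}:=\binom{n}{k'}^{-1}\sum_{x'}2^{D(x'\mid P)}$, the sum ranging over $x'\in2^{(n)}$ with exactly $k'$ ones, the factorization yields $E\,2^{S}=E\,2^{D(\cdot\mid P)}\le1$ under $Q$, and $S$ is lower semicomputable, so Lemma~\ref{lem:2} gives $D(k\mid Q)\ge S(k\mid Q)-O(1)\ge D(x\mid P)-L-O(1)$ for every $p$. Taking the infimum over $p$ delivers the crude bound $\DBern(x)\le\Dbin(k)+L+O(1)$, which is already the theorem when $\KP(x\mid n,k,\Dbin(k))=O(1)$ and shows why the correction term can only subtract. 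To obtain the sharp $-\KP$ term I would invoke the conditional coding characterization of the universal test, $D(x\mid P)=-\log_2 P(x)-\KP(x\mid n,p)+O(1)$ (and the same for $Q$), where $p$ is accessed through the oracle of condition~(2). Combining these two identities with the factorization and the chain rule for prefix complexity gives, pointwise in $p$,
\[
  D(x\mid P)=D(k\mid Q)+L-\KP\bigl(x\mid n,k,p,\KP(k\mid n,p)\bigr)+O(1).
\]

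Taking the infimum over $p$ in this identity is what should yield \eqref{eq:thm1}. The within--level--set term here is a uniform--measure deficiency, and sufficiency lets me drop the oracle $p$ from the description of $x$ once $k$ is fixed, because given $k$ the conditional model is uniform and independent of $p$. It remains to convert the auxiliary conditioning $\KP(k\mid n,p)$ into $\Dbin(k)$: at a $p$ minimizing the binomiality of $k$ one has $\KP(k\mid n,p)=-\log_2 Q(k)-D(k\mid Q)+O(1)=-\log_2 Q(k)-\Dbin(k)+O(1)$, and since $-\log_2 Q(k)$ is computable from $(n,k,p)$, conditioning on $\KP(k\mid n,p)$ is, given $(n,k)$, the same as conditioning on $\Dbin(k)$. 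This is precisely why the value $\Dbin(k)$—rather than $p$ itself—must appear inside $\KP$ in the statement: it records the length of the optimal binomial codeword for $k$ and thereby makes the two--part description of $x$ self--delimiting.

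The hard part will be exactly this passage from the pointwise identity to the infimum. I expect two delicate points, both living in the interplay between $\inf_p$ and the oracle conditioning. First, the coding characterization is relativized to oracle access to the real number $p$, and the entire content of the theorem is that this oracle may be replaced, up to $O(1)$, by the finite data $(n,k,\Dbin(k))$; turning the informal ``$p$ is useless given $k$'' into a clean two--sided $O(1)$ estimate for $\KP\bigl(x\mid n,k,p,\KP(k\mid n,p)\bigr)$ versus $\KP(x\mid n,k,\Dbin(k))$ is the crux. Second, one must check that a single $p$ simultaneously nearly minimizes $D(k\mid Q)$ and the full right--hand side, so that the infimum factors through the two terms; the $S$--summing argument above anchors the optimal $p$ by forcing $\Dbin(k)\ge\DBern(x)-L-O(1)$, but confirming that the residual $p$--dependence of the within--level--set term is genuinely $O(1)$ is where the estimates have to be carried out with care.
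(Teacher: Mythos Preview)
Your overall architecture matches the paper's: derive the pointwise identity
\[
  D(x\mid B_{n,p})=D(k\mid\bin_{n,p})+\log_2\binom{n}{k}-\KP\bigl(x\mid n,k,p,\KP(k\mid n,p)\bigr)+O(1)
\]
from the factorization and the chain rule, then pass to the infimum over $p$. You have also correctly located the hard step.

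However, your sketch for that step contains a genuine gap. The assertion that ``sufficiency lets me drop the oracle $p$ from the description of $x$ once $k$ is fixed'' is false as a statement about prefix complexity. Sufficiency tells you that the conditional \emph{distribution} of $x$ given $k$ is independent of $p$; it says nothing about whether a particular real number $p$, accessed as an oracle, can shorten the description of a particular string $x$. For an adversarial $p$ whose binary expansion encodes $x$ one has $\KP(x\mid p)=O(1)$ regardless of $k$. So the within-level-set term genuinely depends on $p$, and one cannot simply erase the oracle. The same objection blocks your plan to ``convert the auxiliary conditioning $\KP(k\mid n,p)$ into $\Dbin(k)$'' by evaluating at an optimal $p$: that optimal $p$ is, a priori, an uncontrolled real number.

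The paper closes this gap not by removing $p$ but by \emph{choosing} it. It constructs an explicit computable point estimator $E(x)$ (a discretized maximum-likelihood estimate, expressed in the arcsine parametrization $\theta_a=n\sin^2(a/\sqrt n)$) and proves that this single $p=E(x)$ simultaneously achieves both $\DBern(x)$ and $\Dbin(k)$ up to $O(1)$ (Theorem~\ref{thm:core}). Since $E(x)$ is computable from $(n,k)$, conditioning on it is harmless, and your two ``delicate points'' collapse together. The proof of Theorem~\ref{thm:core} is where the real work lies: one shows, via quantitative bounds on $-\log_2 B_{n,p}(x)$ as $p$ varies (Lemmas~2 and~3 of \cite{Vovk:1997}), that moving $p$ away from $E(x)$ increases the negative log-likelihood at least linearly in the arcsine distance $|a-\hat a(x)|$, which dominates the at-most-logarithmic gain $\KP(x\mid n,E(x))-\KP(x\mid n,p)\le 2\log(|a-\hat a(x)|+1)+O(1)$ that the oracle $p$ could provide. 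Your proposal lacks this ingredient, and without it the passage from the pointwise identity to the infimum does not go through.
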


According to \cite{Kolmogorov:1968},
the $x$ in \eqref{eq:thm1} 
is a collective if
$\log_2\binom{n}{k} - K(x\mid n,k)$
is small
(intuitively, this means that the complexity of $x$ in the class of sequences in $2^{(n)}$ containing $k$ 1s
is close to maximal).
The expression in square brackets in~\eqref{eq:thm1} is completely analogous to
$\log_2\binom{n}{k} - K(x\mid n,k)$,
except for the term $\Dbin(k)$.
We can get rid of it at the expense of a certain loss of sharpness.

\begin{corollary}
  For a fixed $\epsilon>0$,
  \begin{multline*}
    \Dbin(k) - O(1)
    \le
    \DBern(x)
    -
    \left[
      \log_2\binom{n}{k} - \KP(x\mid n,k)
    \right]\\
    \le
    (1+\epsilon)\Dbin(k) + O(1).
  \end{multline*}
\end{corollary}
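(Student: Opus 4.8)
The plan is to deduce the Corollary from Theorem~\ref{thm:1} by controlling the single change between the two statements: the replacement of $\KP(x\mid n,k,\Dbin(k))$ by the unconditioned $\KP(x\mid n,k)$. Writing $d:=\Dbin(k)$ and
\[
  Q:=\DBern(x)-\left[\log_2\binom{n}{k}-\KP(x\mid n,k)\right],
\]
I would first subtract \eqref{eq:thm1} from the definition of $Q$. The terms $\DBern(x)$ and $\log_2\binom{n}{k}$ cancel, and Theorem~\ref{thm:1} supplies $\DBern(x)-\log_2\binom{n}{k}+\KP(x\mid n,k,d)=d+O(1)$, leaving the exact reduction
\[
  Q=d+\bigl[\KP(x\mid n,k)-\KP(x\mid n,k,d)\bigr]+O(1).
\]
Thus everything comes down to sandwiching the bracketed complexity difference between $-O(1)$ and, essentially, $\epsilon d$.

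For the left inequality I would use monotonicity of conditional prefix complexity: adjoining the extra datum $d$ to the condition can only lower the complexity up to an additive constant, so $\KP(x\mid n,k,d)\le\KP(x\mid n,k)+O(1)$. Hence the bracket is $\ge-O(1)$ and $Q\ge d-O(1)$, i.e.\ $\Dbin(k)-O(1)\le Q$. For the right inequality I would invoke the subadditivity (chain-rule) bound $\KP(x\mid n,k)\le\KP(x\mid n,k,d)+\KP(d\mid n,k)+O(1)$, obtained by prepending a self-delimiting program for $d$ (given $n,k$) to a program for $x$ (given $n,k,d$); this makes the bracket at most $\KP(d\mid n,k)+O(1)$, so $Q\le d+\KP(d\mid n,k)+O(1)$.

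It remains to compare $\KP(d\mid n,k)$ with $\epsilon d$, and this is the step I expect to be the main obstacle, since a priori $d$ could be small or even slightly negative. I would first record that $d$ is bounded below by a universal constant: the constant test $T\equiv0$ satisfies $\Expect 2^{T}\le1$, so by Lemma~\ref{lem:2} every $D(k\mid P)\ge-C$, and therefore $d=\inf_P D(k\mid P)\ge-C$. Consequently only the regime of large positive $d$ is at issue. There, since $d$ is a single integer, a prefix-free encoding gives $\KP(d\mid n,k)=O(\log_2 d)=o(d)$, so for the fixed $\epsilon$ there is a threshold beyond which $\KP(d\mid n,k)\le\epsilon d$; below the threshold $\KP(d\mid n,k)$ is bounded and absorbed into the additive term. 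Hence $\KP(d\mid n,k)\le\epsilon d+O(1)$, with the implicit constant depending on $\epsilon$, which yields $Q\le(1+\epsilon)d+O(1)$ and completes the upper bound. The one delicacy worth stating explicitly is that the final $O(1)$ is now permitted to depend on the fixed $\epsilon$, exactly as the Corollary allows.
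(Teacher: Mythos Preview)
Your derivation is correct and is exactly the intended route: the paper does not spell out a separate proof of the Corollary, but presents it as an immediate consequence of Theorem~\ref{thm:1} once the extra condition $\Dbin(k)$ is removed, which is precisely what you do via the monotonicity bound $\KP(x\mid n,k,d)\le\KP(x\mid n,k)+O(1)$ and the chain-rule bound $\KP(x\mid n,k)\le\KP(x\mid n,k,d)+\KP(d\mid n,k)+O(1)$, followed by $\KP(d)\le 2\log_2(|d|+1)+O(1)\le\epsilon d+O_\epsilon(1)$ using that $d$ is an integer bounded below. Your handling of the $\epsilon$-dependent constant matches the paper's explicit remark that ``fixed'' means the $O(1)$ may depend on $\epsilon$.
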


The word ``fixed'' in the statement of the corollary means that $\left|O(1)\right|$
is bounded by a value that depends on $\epsilon$.
The quantities $\log_2\binom{n}{k} - \KP(x\mid n,k)$ and $\log_2\binom{n}{k} - K(x\mid n,k)$
differ by at most
\[
  2\log_2
  \left(
    \left|
      \log_2\binom{n}{k} - K(x\mid n,k)
    \right|
    +
    1
  \right)
  +
  O(1)
\]
(we refer to this as coincidence to within $2\log$).

Therefore, our definition of Bernoulliness adds to the requirement of nearly maximal complexity
of $x$ in the class of sequences in $2^*$ with the same length and the same number of 1s as $x$
the requirement of binomiality of the number of 1s.
It turns out that binomiality deficiency can be characterized in complexity-theoretic terms;
this gives a complexity-theoretic characterization of Bernoulli sequences.

If $\mathfrak{A}$ is a partition (of a set into disjoint subsets),
$\mathfrak{A}(k)$ denotes the element of the partition containing $k$.

\begin{theorem}\label{thm:2}
  Let $n>0$ be an integer.
  Set
  \begin{equation}\label{eq:thm2-partition}
    k_s
    =
    \frac{n}{2}
    \left(
      1
      -
      \cos\frac{s}{\sqrt{n}}
    \right)
    \text{\quad for\quad}
    s=0,1,\ldots,\lfloor\pi\sqrt{n}\rfloor,
  \end{equation}
  where $\lfloor\cdot\rfloor$ is integer part.
  Denote by $\mathfrak{A}$ the partition of the set $\{0,1,\ldots,n\}$ into the subsets
  $[k_s,k_{s+1})$,
  where $s=0,1,\ldots,\lfloor\pi\sqrt{n}\rfloor$
  (for $s=\lfloor\pi\sqrt{n}\rfloor$ we set $k_{s+1}=+\infty$).
  If $k\in\{0,1,\ldots,n\}$,
  \begin{equation}\label{eq:Dbin}
    \Dbin(k)
    =
    \log_2\left|\mathfrak{A}\right|
    -
    \KP(k\mid n,\mathfrak{A}(k))
    +
    O(1).
  \end{equation}
\end{theorem}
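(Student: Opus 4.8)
The plan is to start from Lemma~\ref{lem:2}, convert the binomiality deficiency into a complexity expression, and then exploit the variance-stabilizing geometry built into the partition $\mathfrak{A}$.

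First I would record the coding characterization of the universal test: for the binomial family one has $D(k\mid P) = -\log_2 P\{k\} - \KP(k\mid P) + O(1)$, with the $O(1)$ uniform in $k$ and $P$. The nontrivial point is the meaning of $\KP(k\mid P)$ when $P = P_{n,p}$ is specified only through the approximation oracle for the real parameter $p$. I would verify the lower bound by checking that $k\mapsto 2^{-\KP(k\mid P)}/P_{n,p}\{k\}$ is a test, since $\Expect_{P}\bigl[2^{-\KP(k\mid P)}/P_{n,p}\{k\}\bigr] = \sum_k 2^{-\KP(k\mid P)}\le 1$ by Kraft's inequality and $P_{n,p}\{k\}$ is computable from the oracle; conversely, any test yields a lower-semicomputable semimeasure relative to the oracle, which is dominated by the universal one, giving the matching upper bound. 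Taking the infimum over $p$ then yields
\[
  \Dbin(k) = \inf_{p\in[0,1]}\bigl(-\log_2 P_{n,p}\{k\} - \KP(k\mid n,p)\bigr) + O(1).
\]

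Next I would bring in the partition. Writing $\theta = \arcsin\sqrt{k/n}$, the definition \eqref{eq:thm2-partition} is exactly $\arcsin\sqrt{k_s/n} = s/(2\sqrt{n})$, so the cells are the equal, width-$1/(2\sqrt{n})$ blocks of the variance-stabilizing coordinate; since the delta method gives the asymptotic standard deviation of $\theta$ under $P_{n,p}$ as $1/(2\sqrt{n})$, each cell is one standard deviation wide and $\lvert\mathfrak{A}\rvert = \lfloor\pi\sqrt{n}\rfloor + 1$. The estimates I need, all from Stirling and the local limit theorem, are: (i) for the matching parameter $p=k/n$ one has $-\log_2 P_{n,p}\{k\} = \log_2\sigma + O(1)$ with $\sigma = \sqrt{np(1-p)}$ comparable to the width of the cell $\mathfrak{A}(k)$; and (ii) $P_{n,p}\{k\}$ is constant to within a bounded factor as $k$ ranges over one cell and $p$ over the matching cell. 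The role of the variance-stabilizing choice is precisely to make (ii) uniform across the cells. Property (ii) is what lets me collapse the continuum of parameters: the only information about $k$ that the oracle for $p$ can supply, uniformly over oracles and lower-semicomputably, is the index of the cell containing $k$, so for the optimizing $p$ the term $\KP(k\mid n,p)$ may be matched to $\KP(k\mid n,\mathfrak{A}(k)) + O(1)$.

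For the upper bound I would exhibit a near-optimal parameter explicitly: take $p$ to be the representative of the cell $\mathfrak{A}(k)$, which is computable from $n$ and the cell, so $\KP(k\mid n,p) = \KP(k\mid n,\mathfrak{A}(k)) + O(1)$, while $-\log_2 P_{n,p}\{k\} = \log_2(\text{cell width}) + O(1) \le \log_2\lvert\mathfrak{A}\rvert + O(1)$ by (i)--(ii); a prefix code that first names the cell with a uniform $\lceil\log_2\lvert\mathfrak{A}\rvert\rceil$-bit block and then describes $k$ inside it supplies the chain-rule bookkeeping. The lower bound is the converse and is where the real work lies: I must show that for every $p$ the quantity $-\log_2 P_{n,p}\{k\} - \KP(k\mid n,p)$ is at least $\log_2\lvert\mathfrak{A}\rvert - \KP(k\mid n,\mathfrak{A}(k)) - O(1)$. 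This combines a counting bound over the $\lvert\mathfrak{A}\rvert$ cells with conservation of prefix complexity under the chain rule, showing that no admissible oracle for $p$ can be more useful for describing $k$ than the cell index itself.

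I expect the main obstacle to be exactly this lower bound, and within it the uniform control of two competing effects: the oracle for the real parameter $p$, which a priori could encode information about $k$ and must be shown (uniformly over all admissible oracles) to be no more useful than the cell index; and the behavior of the extreme cells, where $p$ is near $0$ or $1$, the factor $\sigma$ is small, the local-limit estimates in (i)--(ii) degrade, and a parameter can be named cheaply. Handling these boundary cells so that all constants and the $\log$-terms collapse into a single $O(1)$ uniform in both $k$ and $n$ is the delicate part; the variance-stabilizing choice in \eqref{eq:thm2-partition} is what keeps the per-cell estimates uniform and lets the two inequalities meet.
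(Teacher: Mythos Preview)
Your overall route is the paper's route: reduce to the coding formula $D(k\mid P)=-\log_2 P\{k\}-\KP(k\mid n,p)+O(1)$, exploit the variance-stabilizing coordinate $a=\arcsin\sqrt{\cdot}$ so that cells have uniform width, and show that the infimum over $p$ is attained (to within $O(1)$) at the cell representative. The paper packages exactly this into Theorem~\ref{thm:core} (equation~\eqref{eq:core-2}): there is a \emph{computable} estimator $E$ with $\Dbin(k)=D(k\mid\bin_{n,E(x)})+O(1)$; once $p=E(x)$ is computable from the cell, the conditioning collapses to $\mathfrak{A}(k)$ and a local-limit estimate (Lemma~2 of \cite{Vovk:1997}) turns $-\log_2\bin_{n,E(x)}\{k\}$ into $\log_2\lvert\mathfrak{A}(k)\rvert+O(1)$.

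Two concrete points to fix. First, you read $\lvert\mathfrak{A}\rvert$ as the number of cells ($\lfloor\pi\sqrt n\rfloor+1$); in the theorem it denotes the size $\lvert\mathfrak{A}(k)\rvert$ of the cell containing $k$ (cf.\ the remark after the theorem and the description of the result as ``randomness deficiency with respect to the uniform measure on a neighbourhood of $k$''). Your reading agrees with the correct one for central $k$ but fails at the edges: for $k=0$ the cell is $\{0\}$, so the right-hand side must be $O(1)$, not $\tfrac12\log_2 n$. Second, your lower-bound mechanism (``counting bound over the cells with conservation of prefix complexity under the chain rule'') is not what does the work. The oracle for a real $p$ can encode $k$ outright, so no chain-rule bookkeeping by itself prevents $\KP(k\mid n,p)$ from dropping to $O(1)$. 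What saves the inequality is a quantitative likelihood estimate in the variance-stabilized scale: for $\lvert a-\hat a\rvert\ge 1$ one has $-\log_2\bin_{n,p}\{k\}\ge -\log_2\bin_{n,E(x)}\{k\}+\epsilon\lvert a-\hat a\rvert$ for a universal $\epsilon>0$ (Lemma~3 of \cite{Vovk:1997}), and this linear gain dominates the at-most $2\log(\lvert a-\hat a\rvert+1)+O(1)$ that an oracle shifted by $\lvert a-\hat a\rvert$ cells can contribute to $\KP$. That trade-off, not a counting argument, is the engine of the lower bound; it is precisely the content of the proof of Theorem~\ref{thm:core}.
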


The right-hand side of \eqref{eq:Dbin} 
coincides with
$
  \log_2\left|\mathfrak{A}\right|
  -
  K(k\mid n,\mathfrak{A}(k))
$
to within $2\log$.
Notice the following properties of the partition $\mathfrak{A}$.
The sets $\{0\}$ and $\{n\}$ are in $\mathfrak{A}$.
If $k\ne0$ and $k\ne n$,
\[
  \left|\mathfrak{A}(k)\right|
  =
  \sqrt{\frac{k(n-k)}{n}}
  \cdot
  2^{O(1)}.
\]
It is easy to see that
\[
  \sqrt{\frac{k(n-k)}{n}}
  =
  \sqrt{n\frac{k}{n}\left(1-\frac{k}{n}\right)}
\]
is an estimate of the standard deviation of the number of 1s.

The author is deeply grateful to his supervisor A.~N.~Kolmogorov
for valuable discussions.
V.~V.~V'yugin's and A.~K.~Zvonkin's comments contributed to the improvement of this note,
and the author expresses his sincere gratitude to them.

\noindent
Scientific Council for Cybernetics \hfill Received by the Board of Governors\\
Academy of Sciences of the USSR \hfill on 6 April 1984\\

\appendix
\section{Historical background}

Kolmogorov's definition of Bernoulliness was part of his project of creating new mathematical foundations
for applications of probability.
By that time, the measure-theoretic foundations for the theory of probability
clearly articulated in his \emph{Grundbegriffe} \cite{Kolmogorov:1933}
had been accepted by the community of researchers working in mathematical probability and statistics
(see, e.g., \cite{Shafer/Vovk:2006};
an important role in the acceptance belonged to Doob's 1953 book \cite{Doob:1953}).
In his approach to the foundations of applications of probability,
Kolmogorov followed Richard von Mises
(referring to \cite{Mises:1931} in Section~I.2 of \cite{Kolmogorov:1933}).
Richard von Mises suggested frequentist foundations based on the notion of a collective;
his original definition was not rigorous, but two different formalizations
were suggested by Abraham Wald \cite{Wald:1937} and Alonzo Church \cite{Church:1940}.
Collectives are often regarded as the first attempt to define the notion of a random sequence
(roughly, a sequence that is a typical realization of a sequence of independent and identically distributed trials).

Kolmogorov regarded collectives as important only for foundations of applications of probability.
He believed that there was no need to change the existing foundations of the theory of probability
based on Kolmogorov's \cite{Kolmogorov:1933} axioms:
``there is no need whatsoever to change the established construction of the mathematical probability theory
on the basis on the general theory of measure'' (\cite{Kolmogorov:1983}, Section~6).

Kolmogorov's first attempt to bring von Mises's infinitary notion of collectives
closer to the needs of practice was his 1963 paper \cite{Kolmogorov:1963}.
After introducing his algorithmic notion of complexity in 1965 \cite{Kolmogorov:1965},
Kolmogorov used it in \cite{Kolmogorov:1968} to define a finitary notion of a binary collective
that was in some sense universal (being based on a universal notion of algorithmic complexity)
and so immune to known examples showing the inadequacy of collectives as formalization of random sequences
(such as Ville's \cite{Ville:1939} demonstration that collectives do not necessarily satisfy the conclusion
of the law of the iterated logarithm).
The same definition was published earlier by Per Martin-L\"of (\cite{Martin-Lof:1966}, Section~VI),
Kolmogorov's PhD student.

Kolmogorov and Martin-L\"of used the term ``Bernoulli sequences'' for their formalization of random sequences,
since they considered only the case of binary random sequences,
where the goal is to formalize typical realizations of repeated independent Bernoulli trials.
(Kolmogorov was very much against what he regarded as premature generalizations
and insisted that the simple binary case should be understood first.)

Two important features of Kolmogorov's approach to foundations of applications of probability
were its finitary character
(``we do not often see infinite sequences around us, do we?'')\
and avoiding probability measures when defining random sequences.
(Perhaps probability measures were to reappear at a later stage as frequencies in random sequences,
as in von Mises's writings.)
However, his PhD students, first of all Per Martin-L\"of \cite{Martin-Lof:1966} and Leonid Levin \cite{Levin:1973,Levin:1976}
were quick to bring into Kolmogorov's theory
both aspects that Kolmogorov himself avoided
(infinite sequences and probability measures).
In terms of probability measures,
Kolmogorov's preferred notion of randomness could be expressed as randomness
with respect to uniform probability measures on finite sets.

\section{This note}

Note \cite{Vovk:1986} defined Bernoulliness as randomness with respect to the class of Bernoulli measures
and compared the resulting notion with that of Kolmogorov \cite{Kolmogorov:1968} and Martin-L\"of \cite{Martin-Lof:1966}.
The latter was identified with the randomness with respect to the exchangeable distributions \eqref{eq:exch}
(without mentioning them explicitly).
The first main result (Theorem~1) of \cite{Vovk:1986} was that the Bernoulliness deficiency
decomposes into the sum of the exchangeability deficiency and the binomiality deficiency of the number of 1s,
where the binomiality deficiency is defined to be the randomness with respect to the class of binomial measures.
The second main result (Theorem~2) expressed the binomiality deficiency of a number $k$ in Kolmogorov's preferred terms,
as the randomness deficiency of $k$ with respect to the uniform probability measure on a certain neighbourhood of $k$.
Therefore, the new notion of Bernoulliness as a whole was expressed in Kolmogorov's preferred terms.

Note \cite{Vovk:1986} was published in a Russian journal
routinely translated into English cover-to-cover.
The current translation uses the one in \cite{Vovk:1986}
but follows the Russian original somewhat less closely.
In particular, it sets the terms being defined (such as ``description method'', ``complexity'', etc.)\
in italics.

\section{Proofs}

The proofs of Theorems~1 and~2 in \cite{Vovk:1986} have never been published,
but they are not difficult to extract from the existing publications,
such as \cite{Vovk/Vyugin:1993} (Theorem~1) and \cite{Vovk:1997} (Lemmas~1--3).
This section will spell them out.

\subsection*{Proof of Theorem~\protect\ref{thm:1}}

Let us set, for $x\in 2^{(n)}$ and $y\in X$,
\begin{equation}\label{eq:exch}
  \Dexch(x\mid y)
  :=
  \log_2\binom{n}{k} - \KP(x\mid n,k,y),
\end{equation}
where $k$ is the number of 1s in $x$.
This is the exchangeability deficiency of $x$
(the randomness deficiency $\inf_P D(x\mid P;y)$ with respect to all exchangeable distributions $P$ on $2^{(n)}$,
conditioned on knowing $y$).
In terms of $\Dexch$, our goal \eqref{eq:thm1} can be rewritten as
\begin{equation}\label{eq:goal}
  \DBern(x)
  =
  \Dexch(x\mid\Dbin(k))
  +
  \Dbin(k)
  +
  O(1).
\end{equation}

According to Theorem~1 in \cite{Vovk/Vyugin:1993},
we have
\begin{equation}\label{eq:basis}
  D(x\mid B_{n,p})
  =
  D(k\mid\bin_{n,p})
  +
  D(x\mid 2^n_k;D(k\mid\bin_{n,p}))
  +
  O(1)
\end{equation}
where
$B_{n,p}$ is the Bernoulli measure with parameters $(n,p)$,
$\bin_{n,p}$ is the binomial measure with parameters $(n,p)$,
$k$ is the number of 1s in $x$,
$2^n_k$ is the set of all sequences in $2^{(n)}$ with $k$ 1s
(identified with the uniform probability measure on this set),
and $D(x\mid 2^n_k;D(k\mid\bin_{n,p}))$ stands for the randomness deficiency of $x$ in $2^n_k$
conditioned on knowing $D(k\mid\bin_{n,p})$.
(Namely, \eqref{eq:basis} is obtained from the equation in Theorem~1 in \cite{Vovk/Vyugin:1993}
by applying $\log$ to both sides;
this is needed since the exposition in \cite{Vovk/Vyugin:1993} is in terms of ``level of impossibility'' $2^{-D}$,
where $D$ is randomness deficiency.)

Roughly, \eqref{eq:goal} corresponds to minimizing both sides of~\eqref{eq:basis} over $p\in[0,1]$;
this works because of the following theorem
(which is a version of Theorem~2 in \cite{Vovk:1997}).

\begin{theorem}\label{thm:core}
  There exists a computable point estimator $E:2^*\to[0,1]$ such that
  \begin{equation}\label{eq:core-1}
    \DBern(x)
    =
    D(x\mid B_{n,E(x)})
    +
    O(1)
  \end{equation}
  and
  \begin{equation}\label{eq:core-2}
    \Dbin(k)
    =
    D(k\mid\bin_{n,E(x)})
    +
    O(1),
  \end{equation}
  where $x$ ranges over $2^*$,
  $n$ is the length of $x$,
  and $k$ is the number of 1s in $x$.
\end{theorem}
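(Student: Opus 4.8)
The plan is to construct a single "universal" parameter estimator $E(x)$ that simultaneously nearly minimizes the Bernoulli randomness deficiency $D(x \mid B_{n,p})$ and the binomial deficiency $D(k \mid \bin_{n,p})$ over $p$. The natural first attempt is to take $E(x)$ to be the maximum-likelihood estimate $k/n$, but this need not achieve the infimum defining $\DBern(x)$, since the optimal $p$ for the test $D$ may differ from $k/n$ by the influence of the lower-semicomputability and the additive optimality constant. So instead I would let $E(x)$ be a computable value of $p$ that comes within $O(1)$ of realizing the infimum in the definition of $\DBern(x)$.

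The main tool is a "minimax/compactness" argument on the parameter space $[0,1]$. Concretely, I would first observe that, by definition, $\DBern(x) = \inf_p D(x \mid B_{n,p})$ and $\Dbin(k) = \inf_p D(k \mid \bin_{n,p})$, so the content of the theorem is that a \emph{single} computable choice of $p$, namely $E(x)$, nearly attains \emph{both} infima at once. The decomposition \eqref{eq:basis} is the key link: since $D(x \mid B_{n,p})$ splits as $D(k \mid \bin_{n,p}) + D(x \mid 2^n_k; D(k \mid \bin_{n,p})) + O(1)$ and the middle term $D(x \mid 2^n_k; \cdot)$ does not depend on $p$ (it concerns randomness within the fixed slice $2^n_k$), minimizing the left side over $p$ is, up to $O(1)$, the same as minimizing $D(k \mid \bin_{n,p})$ over $p$. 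This already suggests that the $p$ minimizing the Bernoulli deficiency and the $p$ minimizing the binomial deficiency coincide up to $O(1)$, which is exactly the compatibility \eqref{eq:core-1}--\eqref{eq:core-2} demands.

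The step requiring care is computability: I must exhibit $E$ as a total computable function, not merely assert existence of a good $p$. Here I would use lower semicomputability of the test $D$ (property (2) of tests) together with the oracle formulation: the function $p \mapsto D(k \mid \bin_{n,p})$ is lower semicomputable, so I can effectively enumerate increasingly good rational approximations and run a dovetailed search over a fine rational grid of $[0,1]$ to locate a rational $p = E(x)$ whose binomial deficiency is within $O(1)$ of the infimum. Because $\{0,1,\ldots,n\}$ is finite and the dependence on $p$ is continuous in the relevant sense, such a search terminates and yields a computable estimator. The delicate point is ensuring the \emph{same} grid value works for both the binomial deficiency and the full Bernoulli deficiency with the additive constant uniform in $n$; this is where I would lean on \eqref{eq:basis} to transfer optimality from the binomial problem to the Bernoulli problem.

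The hard part will be controlling the additive $O(1)$ constants \emph{uniformly} across all $n$ and $x$, rather than per-instance. A naive search might incur an error that grows with $n$ (for instance, from the granularity of the grid or from conditioning on the value $D(k\mid\bin_{n,p})$ inside the middle term of \eqref{eq:basis}). The resolution, which I would model on Theorem~2 of \cite{Vovk:1997}, is to build $E$ so that its output is itself cheaply describable given $(n,k)$ — so that conditioning on $E(x)$ costs no more than conditioning on $\Dbin(k)$ up to $O(1)$ — and to absorb the grid error into the optimality constant of the universal test $D$ guaranteed by Lemma~\ref{lem:2}. Once $E$ is fixed with these properties, \eqref{eq:core-1} follows by combining $\DBern(x) \le D(x \mid B_{n,E(x)})$ (trivially, as an infimum) with the reverse inequality obtained from \eqref{eq:basis} and near-optimality of $E(x)$ for the binomial problem, and \eqref{eq:core-2} follows analogously by specializing the same estimate to the number of 1s.
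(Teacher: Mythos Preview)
Your plan has a genuine gap at the ``dovetailed search'' step. Lower semicomputability of $D$ lets you certify that a deficiency is \emph{large} (the approximations increase to the true value), never that it is \emph{small}; so a search over a grid of rational $p$'s cannot effectively locate a near-minimizer of $p\mapsto D(k\mid\bin_{n,p})$, and there is no obvious stopping rule that yields a total computable $E$ with a uniform $O(1)$ guarantee. A second, related gap is your claim that the middle term of \eqref{eq:basis} ``does not depend on $p$'': it does, through the conditioning datum $D(k\mid\bin_{n,p})$, so minimizing the left-hand side of \eqref{eq:basis} over $p$ is not literally the same problem as minimizing $D(k\mid\bin_{n,p})$. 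You notice this later but do not say how to control it; without that control, the transfer from \eqref{eq:core-2} to \eqref{eq:core-1} via \eqref{eq:basis} does not go through.

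The paper does not search at all and does not use \eqref{eq:basis} in the proof of this theorem. It fixes an \emph{explicit} estimator---essentially the maximum-likelihood value, but expressed in the variance-stabilizing (arcsine) coordinate $a$ with $\theta_a=n\sin^2(a/\sqrt n)$---and proves directly that $D(x\mid B_{n,p})\ge D(x\mid B_{n,E(x)})-O(1)$ for every $p$, using the identity $D(x\mid B_{n,p})=-\log_2 B_{n,p}(x)-\KP(x\mid n,p)+O(1)$ together with likelihood inequalities (Lemmas~2 and~3 of \cite{Vovk:1997}). The point of the arcsine reparametrization is that for $|a-\hat a(x)|\ge1$ the log-likelihood penalty grows \emph{linearly} in $|a-\hat a(x)|$, which dominates the at-most $2\log(|a-\hat a(x)|+1)$ change in $\KP(x\mid n,p)$; for $|a-\hat a(x)|<1$ the log-likelihood changes by $O(1)$ and $E(x)$ is computable from $(n,p)$ so $\KP(x\mid n,E(x))\le\KP(x\mid n,p)+O(1)$. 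So the ``natural first attempt'' you discarded---the MLE---is in fact the right estimator, and the real work is the quantitative likelihood analysis, not a compactness/search argument.
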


An estimator $E$ satisfying the conditions in Theorem~\ref{thm:core}
is described at the beginning of Subsection~4.1 of \cite{Vovk:1997}.
For agreement with that paper,
let us replace the partition \eqref{eq:thm2-partition}
by the equivalent partition
(cf.\ the identity $1-\cos(2\alpha)=2\sin^2\alpha$ and Lemma~1 in \cite{Vovk:1997})
\begin{equation*}
  \theta_a
  =
  n
  \sin^2\frac{a}{\sqrt{n}}
  \text{\quad for\quad}
  a=0,1,\ldots,\lfloor\pi\sqrt{n}\rfloor.
\end{equation*}
It will be convenient (as in \cite{Vovk:1997}) to allow $a$ to be any number in the interval
$
  \left[
    0,
    \pi\sqrt{n}
  \right]
$.

Theorem~\ref{thm:core} and~\eqref{eq:basis} immediately imply~\eqref{eq:goal}:
\begin{multline*}
  \Dexch(x\mid\Dbin(k))
  +
  \Dbin(k)\\
  =
  D(x\mid 2^n_k;D(k\mid\bin_{n,E(x)}))
  +
  D(k\mid\bin_{n,E(x)})
  +
  O(1)\\
  =
  D(x\mid B_{n,E(x)})
  +
  O(1)
  =
  \DBern(x)
  +
  O(1).
\end{multline*}

\begin{proof}[Proof of Theorem~\ref{thm:core}]
  Let us check \eqref{eq:core-1};
  the proof of \eqref{eq:core-2} is similar.
  We are required to prove
  \begin{equation*}
    D(x\mid B_{n,p})
    \ge
    D(x\mid B_{n,E(x)})
    -
    O(1).
  \end{equation*}
  We will do this separately for the cases $\left|a-\hat a(x)\right|<1$ and $\left|a-\hat a(x)\right|\ge1$,
  where $a$ is defined by $\theta_a=p$
  and $\hat a$ is defined before Lemma~2 in \cite{Vovk:1997}.

  If $\left|a-\hat a(x)\right|<1$,
  \begin{align*}
    D(x\mid B_{n,p})
    &=
    -\log_2 B_{n,p}
    -
    \KP(x\mid n,p)
    +
    O(1)\\
    &\ge
    -\log_2 B_{n,E(x)}
    -
    \KP(x\mid n,E(x))
    -
    O(1)\\
    &=
    D(x\mid B_{n,E(x)})
    -
    O(1),
  \end{align*}
  where the inequality uses Lemma~2 in \cite{Vovk:1997}
  (ensuring $-\log_2 B_{n,p}\ge-\log_2 B_{n,E(x)}-O(1)$)
  and $\KP(x\mid n,E(x))\ge\KP(x\mid n,p)-O(1)$.

  If $\left|a-\hat a(x)\right|\ge1$,
  \begin{align*}
    D(x\mid B_{n,p})
    &=
    -\log_2 B_{n,p}
    -
    \KP(x\mid n,p)
    +
    O(1)\\
    &\ge
    -\log_2 B_{n,E(x)}
    +
    \epsilon
    \left|
      a - \hat a(x)
    \right|\\
    &\quad{}-
    \KP(x\mid n,E(x))
    -
    2\log
    \left(
      \left|
        a - \hat a(x)
      \right|
      +
      1
    \right)
    -
    O(1)\\
    &\ge
    D(x\mid B_{n,E(x)})
    -
    O(1),
  \end{align*}
  where $\epsilon>0$ is a universal constant
  and the inequality uses Lemma~3 in \cite{Vovk:1997}
  and the standard bound $\KP(m)\le2\log m+O(1)$ for $m\in\{1,2,\ldots\}$.
\end{proof}

\subsection*{Proof of Theorem~\protect\ref{thm:2}}

Theorem~\ref{thm:2} will follow from \eqref{eq:core-2} in Theorem~\ref{thm:core}
and Lemma~2 in \cite{Vovk:1997}
if we show that
\[
  B_p
  \left\{
    \left|a-\hat a(x)\right|<1
  \right\}
  \ge
  \epsilon
\]
for some universal constant $\epsilon>0$,
where again $a$ and $p$ are connected by $\theta_a=p$.
This follows immediately from Corollary~3 of \cite{Vovk:1997}
and Chebyshev's inequality.

\renewcommand\refname{Additional references}

\end{document}